\theoremstyle{plain}
\newtheorem{assumption}{Assumption}
\begin{document}

\title*{A hybrid discontinuous Galerkin method for transport equations on networks}
\titlerunning{A hybrid discontinuous Galerkin method for transport equations on networks}
\author{Herbert Egger and Nora Philippi}

\institute{
Herbert Egger
\at 
TU Darmstadt,\\
Karolinenplatz 5, 64289 Darmstadt, Germany\\
\email{egger@mathematik.tu-darmstadt.de}
\and 
Nora Philippi \at  TU Darmstadt,\\
Karolinenplatz 5, 64289 Darmstadt, Germany \\ 
\email{philippi@mathematik.tu-darmstadt.de}
}

\maketitle

\abstract{
We discuss the mathematical modeling and numerical discretization of transport problems on one-dimensional networks. 
Suitable coupling conditions are derived that guarantee conservation of mass across network junctions and dissipation of a  mathematical energy which allows to prove existence of unique solutions. We then consider the space discretization by a hybrid discontinuous Galerkin method which provides a suitable upwind mechanism to handle the transport problem and allows to incorporate the coupling conditions in a natural manner. In addition, the method inherits mass conservation and stability of the continuous problem. Order optimal convergence rates are established and illustrated by numerical tests.
\keywords{hybrid discontinuous Galerkin methods, transport problems, partial differential equations on networks
\\[5pt]
{\bf MSC }(2010){\bf:} 
65M08, 65N08, 35Q30 
}
}

\section{Introduction}

Partial differential equations on networks arise in various applications including traffic flow, gas or water supply networks, and elastic multi-structures. We refer to \cite{GaravelloPiccoli,LagneseLeugeringSchmidt,Mugnolo}
for mathematical background, further applications, and references.
In this paper, we study scalar conservation laws on one dimensional network structures describing, e.g., the transport of a chemical substance in a flow through a network of pipes.
A linear advection equation is used to model the transport within the pipes and appropriate coupling conditions are formulated to describe the mixing of flows and the conservation of mass at network junctions. 
For the semi-discretization in space, we consider a hybrid discontinuous Galerkin method which turns out to be particularly well-suited for dealing with the hyperbolic nature of the problem and the coupling conditions at network junctions. Stability and conservation of the semi-discrete scheme as well as order optimal error estimates are established.

The rest of the paper is structured as follows: 
In section~\ref{sec:problem}, we introduce the basic notation and then give a complete formulation of the considered problem. A particular choice is made for the coupling conditions which allows us to prove conservation of mass and stability of the overall system. 
In section~\ref{sec:hdg}, we introduce the discretization and prove conservation, stability, and the error estimates. 
Some numerical tests are presented in section~\ref{sec:num} for illustration of our results.

\vspace*{-1em}

\section{Notation and problem formulation}
\label{sec:problem}

Following the notation of \cite{EggerKugler}, 
the topology of the pipe network is described by a finite, directed, and connected graph $\mathcal{G}=(\mathcal{V},\mathcal{E})$ with vertex set $\mathcal{V}=\{v_1,\dots,v_n\}$ and set of edges $\mathcal{E}=\{e_1,\dots,e_m\}\subset \mathcal{V}\times\mathcal{V}$.
For any vertex $v\in\mathcal{V}$, we denote by $\mathcal{E}(v)$ the set of edges  having $v$ as a vertex, and we distinguish between inner vertices (junctions) $\mathcal{V}_0=\{v\in\mathcal{V}:\vert\mathcal{E}(v)\vert\geq 2\}$ and boundary vertices $\mathcal{V}_{\partial}=\mathcal{V}\backslash\mathcal{V}_0$. 
For any edge $e = (v_i,v_j)$, we define $n^{e}(v_i)=-1$ and $n^{e}(v_j)=1$ to indicate the start and the end point of the edge, and we set $n^e(v)=0$ if $v \not\in e$. 
%
We further identify $e$ with the interval $[0,\ell^{e}]$ of positive length $\ell^e$ and denote by $L^2(e)=L^2(0,\ell^{e})$ the space of square integrable functions on the edge $e\in\mathcal{E}$, and by
\begin{equation*}
L^2(\mathcal{E})=L^2(e_1)\times\dots\times L^2(e_m)=\{u: u^{e}\in L^2(e)\ \text{for all}\ e\in\mathcal{E}\}
\end{equation*}
the corresponding space on the network. Here and below, $u^{e}=u\vert_e$ denotes the restriction of a function defined over the network to a single edge $e$. We use
\begin{equation*}
\Vert u\Vert_{L^2(\mathcal{E})}^2=\sum_{e\in\mathcal{E}}\Vert u^{e}\Vert_{L^2(e)}^2\qquad\text{and}\qquad (u,w)_{L^2(\mathcal{E})}=\sum_{e\in\mathcal{E}}(u^{e},w^{e})_{L^2(e)}
\end{equation*}
to denote the natural norm and scalar product of $L^2(\mathcal{E})$ and 
define by 
\begin{equation*}
H_{pw}^s(\mathcal{E})=\{u\in L^2(\mathcal{E}): u^{e}\in H^s(e)\ \text{for all}\ e\in\mathcal{E}\}
\end{equation*}
the broken Sobolev spaces which are equipped with the canonical norms
\begin{equation*}
\Vert u\Vert_{H_{pw}^s(\mathcal{E})}^2=\sum_{e \in \mathcal{E}} \|u^e\|_{H^s(e)}^2.
\end{equation*}
Let us note that $H^0_{pw}(\mathcal{E}) = L^2(\mathcal{E})$ and
for $s> 1/2$ the functions $u\in H^s_{pw}(\mathcal{E})$ are continuous along edges $e\in\mathcal{E}$ but may be discontinuous across junctions $v\in\mathcal{V}_0$.

\noindent
On every edge of the network, the transport is described by
\begin{alignat}{2}
	a^e(x)\partial_t u^e(x,t)+\partial_x (b^e u^e(x,t))=&\ 0, &&x\in e,\ t>0,\label{eq:sys1} \\
	u^e(x,0)=&\ u_0^e(x), \qquad &&x\in e.\label{eq:sys2}
\end{alignat}
Here $u^e$ is the concentration of the substance on pipe $e$, $a^e$ represents the cross sectional area of $e$, and $b^e$ is the given volume flow rate.
\begin{assumption} \label{ass:1} 
We have $a,\ b\in H^1_{pw}(\mathcal{E})$ with $a(x) \ge a_0 > 0$ and $b^e$ constant on every edge. Moreover, we require flow conservation at junctions, i.e., 
\begin{equation} \tag{C}
\sum_{e\in\mathcal{E}(v)}b^e n^e(v)=0\qquad \text{for all}\ v\in\mathcal{V}_0.\label{eq:sumbn} 
\end{equation}
\end{assumption}
The conditions on $b$ characterize an incompressible background flow. 
%
Using the above assumption, we can associate a unique flow direction to every edge and define for every vertex $v \in \mathcal{E}(v)$ the sets $\mathcal{E}^{\text{in}}(v)=\{e\in\mathcal{E}:b^{e}n^{e}(v)>0\}$ and $\mathcal{E}^{\text{out}}(v)=\{e\in\mathcal{E}:b^{e}n^{e}(v)<0\}$ of edges producing flow into or out of the vertex. 
We also decompose $\mathcal{V}_{\partial}$ into the sets of inflow and outflow vertices   $\mathcal{V}_{\partial}^{\text{in}}=\{v\in\mathcal{V}_{\partial}:b^{e}n^{e}(v)<0\ \text{for}\ e\in\mathcal{E}(v)\}$ and  $\mathcal{V}_{\partial}^{\text{out}}=\{v\in\mathcal{V}_{\partial}:b^{e}n^{e}(v)>0\ \text{for}\ e\in\mathcal{E}(v)\}$.
The local transport problems \eqref{eq:sys1}--\eqref{eq:sys2} are then complemented by coupling and boundary conditions
\begin{equation}\label{eq:sys3}
	u^e(v,t)=\hat{u}^v(t)\quad \text{for all } v\in\mathcal{V},\ e\in\mathcal{E}^{\text{out}}(v),\ t>0
\end{equation}
with auxiliary values $\hat{u}^v$ defined for $t \ge 0$ by the relations
\begin{equation}
	\hat{u}^v(t)=g^v(t),\qquad  v\in\mathcal{V}_{\partial}^{\text{in}} \label{eq:sys4}
\end{equation}
for inflow vertices. On the remaining vertices $v\in\mathcal{V}_0 \cup \mathcal{V}_\partial^{\text{out}}$, we set
\begin{equation}
\sum_{e\in\mathcal{E}^{\text{in}}(v)} b^e n^e(v)  \hat{u}^v(t) = \!\!\!\!\! \sum_{e\in\mathcal{E}^{\text{in}}(v)} b^e n^e(v)  u^e(v,t).\label{eq:sys5}
\end{equation}
From Assumption~\ref{ass:1}, we deduce that $\sum_{e\in \mathcal{E}^{\text{in}}(v)} b^e n^e(v)  > 0$, so that $\hat{u}^v$ is well-defined for all $v \in \mathcal{V}$ and can be eliminated using \eqref{eq:sys4} and \eqref{eq:sys5}. Furthermore, 
one can see that
$\hat{u}^v$ is a convex combination, i.e., a mixture, of the concentrations $u^e(v)$ in the flows entering the junction $v$. Using condition \eqref{eq:sumbn}, one can further see that the mass at inner vertices is conserved, more precisely
\begin{equation}
	\sum_{e\in\mathcal{E}^{\text{out}}(v)} b^e n^e(v)  \hat{u}^v(t) = -\!\!\!\!\! \sum_{e\in\mathcal{E}^{\text{in}}(v)} b^e n^e(v)  u^e(v,t)\quad \text{for all } v\in\mathcal{V}_0.\label{eq:sys6}
\end{equation}
The transport problem on networks is now fully described by \eqref{eq:sys1}--\eqref{eq:sys5}. 
It turns out that the number and type of coupling and boundary conditions is appropriate to guarantee stability and well-posedness of the problem and to ensure conservation of mass across network junctions. 
\begin{theorem}\label{WP-network}
Let Assumption~\ref{ass:1} hold. Then for any $u_0\in H_{pw}^1(\mathcal{E})$ and $g\in H^2(0,T;\mathcal{V}_\partial^{\text{in}})$ with $T>0$, satisfying \eqref{eq:sys3}--\eqref{eq:sys5} for $t=0$ with some $\hat{u}(0) = \hat{u}_0 \in \mathbb{R}^{|\mathcal{V}|}$,
the transport problem \eqref{eq:sys1}--\eqref{eq:sys5} has a unique solution
$u\in C^1([0,T];L^2(\mathcal{E}))\cap C^0([0,T];H_{pw}^1(\mathcal{E}))$ and $\hat u \in C^0([0,T];\mathbb{R}^{|\mathcal{V}|})$.
Moreover, the conservation property 
\begin{equation*} 
\frac{d}{dt}\int_{\mathcal{E}}au(x,t)\ dx=-\sum_{v\in\mathcal{V}_{\partial}} b^e n^e(v)  u^e(v,t)
\end{equation*}
holds as well as the energy identity
\begin{eqnarray*} 
\frac{d}{dt}\Vert a^{1/2}u \Vert^2_{L^2(\mathcal{E})} = 
&& -\sum_{v\in\mathcal{V}_\partial^{\text{out}}} |b^e n^e(v)| |u^{e}(v)|^2  +\sum_{v\in\mathcal{V}_\partial^{\text{in}}} |b^e n^e(v)| |g^v|^2\\
&& - \sum_{v \in \mathcal{V}_0} \sum_{e \in \mathcal{E}^{\text{in}}(v)} |b^e n^e(v)|  |u^e(v) - \hat{u}^v|^2. 
\end{eqnarray*}
\end{theorem}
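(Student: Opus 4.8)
The plan is to cast \eqref{eq:sys1}--\eqref{eq:sys5} as an abstract Cauchy problem on the Hilbert space $X = L^2(\mathcal{E})$ equipped with the weighted inner product $(u,w)_X = (a u, w)_{L^2(\mathcal{E})}$, which by Assumption~\ref{ass:1} ($a_0 \le a$ and $a \in H^1_{pw}(\mathcal{E}) \subset L^\infty(\mathcal{E})$) is equivalent to the standard one. Eliminating the auxiliary values through \eqref{eq:sys4}--\eqref{eq:sys5}, I would introduce the operator $A u = -\tfrac{1}{a}\partial_x(b u)$ with domain $D(A)$ consisting of those $u \in H^1_{pw}(\mathcal{E})$ satisfying the \emph{homogeneous} coupling conditions, i.e.\ \eqref{eq:sys3}--\eqref{eq:sys5} with $g \equiv 0$, and treat the inflow data $g$ separately by a lifting argument. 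Existence and uniqueness for the homogeneous problem then reduce to showing that $A$ generates a $C_0$-semigroup of contractions on $X$, which I would establish via the Lumer--Phillips theorem.

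The first ingredient is \emph{dissipativity}. For $u \in D(A)$, edgewise integration by parts (using that $b^e$ is constant) gives $2(Au,u)_X = -(\partial_x(bu),u)_{L^2(\mathcal{E})}\cdot 2 = -\sum_{v \in \mathcal{V}} \sum_{e \in \mathcal{E}(v)} b^e n^e(v) |u^e(v)|^2$, which is exactly the quantity appearing in the energy identity. Splitting each inner sum into $\mathcal{E}^{\text{in}}(v)$ and $\mathcal{E}^{\text{out}}(v)$, and using $u^e(v)=\hat u^v$ on $\mathcal{E}^{\text{out}}(v)$ from \eqref{eq:sys3}, the mixing relation \eqref{eq:sys5}, and flow conservation \eqref{eq:sumbn}, the contribution of every $v\in\mathcal{V}_0$ collapses through the weighted-variance identity
\begin{equation*}
\sum_{e\in\mathcal{E}^{\text{in}}(v)} |b^e n^e(v)|\,|u^e(v)|^2 - \beta(v)|\hat u^v|^2 = \sum_{e\in\mathcal{E}^{\text{in}}(v)} |b^e n^e(v)|\,|u^e(v)-\hat u^v|^2,\quad \beta(v)=\!\!\!\sum_{e\in\mathcal{E}^{\text{in}}(v)}\!\!\! b^e n^e(v)>0,
\end{equation*}
into a nonnegative dissipation term, while at boundary vertices only the outflow traces survive for $g\equiv 0$. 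This yields $(Au,u)_X \le 0$. Next comes the \emph{range condition}: for some $\lambda>0$, $\lambda - A$ must be surjective onto $X$. Given $f\in X$, the equation $\lambda u - Au = f$ is, on each edge, the linear first-order ODE $b^e\partial_x u^e + \lambda a^e u^e = a^e f^e$; since $b^e$ is a nonzero constant, integrating in the flow direction expresses each downstream trace as the upstream value multiplied by the strictly contractive factor $\exp(-\lambda\int_e a^e/|b^e|)\in(0,1)$ plus a particular term. Imposing the coupling conditions (upstream values equal the mixtures $\hat u^v$ from \eqref{eq:sys5}, and $\hat u^v=0$ at inflow vertices) turns this into a linear system for the vertex values $(\hat u^v)_v$. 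Because each edge contracts strictly and junction averaging is a convex combination, the associated matrix contracts along every directed path and cycle, so $I$ minus this matrix is invertible and the system is uniquely solvable. I expect this step to demand the most care, as it must accommodate directed cycles in $\mathcal{G}$. Together with density of $D(A)$ (which contains all smooth functions vanishing near $\mathcal{V}$), Lumer--Phillips gives the contraction semigroup.

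To incorporate the inflow data and recover regularity, I would lift $g$: choose $w(t)\in H^1_{pw}(\mathcal{E})$ supported near the inflow vertices with $w^e(v,t)=g^v(t)$ there and $w$ vanishing near every $v\in\mathcal{V}_0\cup\mathcal{V}_\partial^{\text{out}}$ (possible since $\ell^e>0$), so that $\tilde u := u-w$ satisfies the homogeneous coupling conditions and the inhomogeneous equation $\partial_t\tilde u = A\tilde u + \tfrac1a F(t)$ with $F=-a\partial_t w-\partial_x(bw)$. The compatibility hypothesis at $t=0$ ensures $\tilde u(0)=u_0-w(0)\in D(A)$, and $g\in H^2(0,T)$ ensures $t\mapsto\tfrac1a F(t)\in W^{1,1}(0,T;X)$, which is precisely the regularity needed for the variation-of-constants solution to be classical, $\tilde u\in C^1([0,T];X)\cap C^0([0,T];D(A))$; undoing the lift yields $u$ with the claimed regularity, and $\hat u\in C^0([0,T];\mathbb{R}^{|\mathcal{V}|})$ follows from boundedness of the trace map and $g\in C^0$.

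With a genuine solution at hand, the two identities are direct. Differentiating $\int_{\mathcal{E}} a u\,dx$ in time, substituting $a\partial_t u = -\partial_x(bu)$, and integrating by parts edgewise gives $\tfrac{d}{dt}\int_{\mathcal{E}} a u\,dx = -\sum_{v\in\mathcal{V}}\sum_{e\in\mathcal{E}(v)} b^e n^e(v) u^e(v)$; the mass balance \eqref{eq:sys6} (with \eqref{eq:sys3}) makes every inner vertex contribute zero, leaving only the boundary terms of the conservation statement. Likewise, differentiating $\|a^{1/2}u\|^2_{L^2(\mathcal{E})}$ and repeating the computation of the second paragraph, now retaining the $g$-terms at inflow vertices, reproduces the stated energy identity.
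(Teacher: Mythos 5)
Your proposal is correct and follows exactly the route the paper indicates: the energy identity is derived directly from \eqref{eq:sys1}--\eqref{eq:sys5} by edgewise integration by parts and the coupling conditions, and existence/uniqueness is obtained from the Lumer--Phillips theorem (the paper itself only sketches this and defers the details to the cited thesis, which your dissipativity, range-condition, and lifting arguments supply). No gaps.
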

\begin{proof}
The energy identity can be derived directly from \eqref{eq:sys1}--\eqref{eq:sys5} and establishes stability of the evolution problem. Existence of a unique solution then follows from the Lumer-Phillips theorem and semigroup theory \cite{Engel,Pazy}; a detailed proof can be found in \cite{Philippi}. 
Related results can also be found in \cite{Dorn,EggerKugler,Kramar,Mugnolo}. 
\end{proof}

\begin{remark}
For junctions with more than two inflow pipes, the last term in the energy estimated does in general not vanish and represents dissipation, i.e., loss of information, due to mixing.  
\end{remark} 

\vspace*{-1em}

\section{A hybrid discontinuous Galerkin method}
\label{sec:hdg}

We now formulate a discontinuous Galerkin method for the semi-discretization of problem~\eqref{eq:sys1}--\eqref{eq:sys5};
see \cite{Ern,Johnson} for a general introduction.
Hybridization introduces additional unknowns $\hat{u}^v$ at the grid points of the mesh which play a similar role as the auxiliary mixing values in the coupling conditions \eqref{eq:sys3}. 
The spatial grid is defined by
\begin{align*}
\mathcal{T}_h&=\{T^{e}_{i}=[x^{e}_{i-1},x^{e}_{i}]:i=1,\dots,M^{e},\ x^{e}_0=0,\ x^{e}_{M^{e}}=\ell^{e},\ e\in\mathcal{E}\}
\end{align*}
with local and global mesh size denoted by $h_i^{e}=x_{i}^{e}-x_{i-1}^{e}$ and $h=\max h_i^{e}$. 
As approximation spaces for the concentration field, we choose 
\begin{align*}
W_h&=\{w_h\in L^2(\mathcal{E}):w_h\vert_{T}\in P_k(T)\ \text{for all}\ T\in\mathcal{T}_h\},
\end{align*} 
i.e., spaces of piecewise polynomials of degree $ \le k$. 
Functions in $W_h$ may formally take multiple values at grid points. 
We introduce grid dependent scalar products
\begin{align*}
(u,w)_{\mathcal{T}_h}=\sum_{T\in\mathcal{T}_h}(u,w)_{L^2(T)},
\qquad 
\langle   u,w\rangle_{\partial\!\mathcal{T}_h}=\sum_{T\in\mathcal{T}_h} u(x_{i-1})w(x_{i-1})+u(x_{i})u(x_{i}), 
\end{align*}
where $T=[x_{i-1},x_i]$, and corresponding norms $\Vert w\Vert_{\mathcal{T}_h}^2=(w,w)_{\mathcal{T}_h}$ and $\vert w\vert_{\partial\!\mathcal{T}_h}^2=\langle w,w\rangle_{\partial\!\mathcal{T}_h}$. The broken Sobolev spaces over the mesh $\mathcal{T}_h$ are denoted by
\begin{align*}
H_{pw}^s(\mathcal{T}_h)=\{w\in L^2(\mathcal{E}):w\vert_T\in H^s(T)\ \text{for all}\ T \in\mathcal{T}_h\}.
\end{align*}
We further introduce the spaces of hybrid variables
\begin{equation*}
\hat{W}_h =\mathbb{R}^{\hat{M}}\quad\text{and}\quad \hat{W}_h^0=\{\hat{w}\in\hat{W}_h:\hat{w}^v=0\ \text{for all}\ v\in\mathcal{V}_{\partial}^{\text{in}}\}
\end{equation*}
with $\hat{M}=\vert\mathcal{V}\vert+\sum_{e\in\mathcal{E}}(M^e-1)$ denoting the total number of grid points. Note that grid points associated to the same junction $v \in \mathcal{V}$ are identified.
For the numerical approximation of \eqref{eq:sys1}--\eqref{eq:sys5}, we then consider the following semi-discrete scheme. 

\begin{problem}\label{prob:hdg} %
Find $u_h\in H^1([0,T];W_h)$ and $\hat{u}_h\in H^1([0,T];\hat{W}_h)$ such that $(u_h(0),w_h)_{\mathcal{T}_h}=(u_0,w_h)_{\mathcal{T}_h}$ for all $W_h$ and such that $\hat{u}_h^v(t) = g^v(t)$ for all $v\in\mathcal{V}_\partial^{\text{in}}$ and  
\begin{equation}\label{varform-HDG}
(a\partial_t u_h(t),w_h)_{\mathcal{T}_h}+b_h(u_h(t),\hat{u}_h(t);w_h,\hat{w}_h)=0
\end{equation}	
holds for all for all $w_h\in W_h$ and $\hat{w}_h\in\hat{W}_h^0$ and all $0 \le t \le T$, with bilinear form
\begin{equation}\label{def-bh}
b_h(u_h,\hat{u}_h;w_h,\hat{w}_h)=-(bu_h,\partial_xw_h)_{\mathcal{T}_h}+\langle bn \, u_h^\ast,w_h-\hat{w}_h\rangle_{\partial\!\mathcal{T}_h}+\langle bn\hat{u}_h(t),\hat{w}_h\rangle_{\mathcal{V}_{\partial}^{\text{out}}},
\end{equation} 
and upwind value $bn\, u_h^\ast=\max( b n,0) u_h + \min(bn,0) \hat{u}_h$ in flow direction.
\end{problem}
As noted in \cite{EggerSchoeberl}, the hybrid variable $\hat u_h$ can be eliminated from the system resulting in a standard discontinuous Galerkin discretization with upwind fluxes. At network junctions $v \in \mathcal{V}_0$, the hybrid variable $\hat u_h^v$ is determined by a discrete version of the coupling condition \eqref{eq:sys5}, which can be verified by appropriate testing. 
Let us start with summarizing some basic properties of the hybrid discontinuous Galerkin scheme. 
\begin{lemma}\label{lemma-bh-stab}
The bilinear form $b_h$ is semi-elliptic on the discrete spaces, i.e., \begin{equation*}
b_h(w_h,\hat{w}_h;w_h,\hat{w}_h)=\frac{1}{2}\big\vert \vert b\vert^{1/2}(w_h-\hat{w}_h)\big\vert_{\partial\!\mathcal{T}_h}^2+\frac{1}{2}\big\vert\vert b\vert^{1/2}\hat{w}_h\big\vert^2_{\mathcal{V}_{\partial^{\text{out}}}}
\ \ \forall w_h \in W_h, \ \hat{w}_h \in \hat{W}_h^0.
\end{equation*}
As a consequence, Problem~\ref{prob:hdg} is uniquely solvable. Moreover, the solution satisfies 
\begin{equation*} 
\frac{d}{dt}\int_{\mathcal{E}}au_h(x,t)\ dx=-\sum_{v\in\mathcal{V}_{\partial}} b^e n^e(v) u_h^e(v,t)
\end{equation*}
for all $0 \le t \le T$, as well as the discrete energy identity
\begin{equation*}
\frac{d}{dt}\Vert a^{1/2}u_h\Vert_{\mathcal{T}_h}^2+\big\vert \vert b\vert^{1/2}(u_h-\hat{u}_h)\big\vert^2_{\partial\!\mathcal{T}_h}+\vert \vert b\vert^{1/2}\hat{u}_h\vert_{\mathcal{V}_{\partial}^{\text{out}}}^2 = \big\vert \vert b\vert^{1/2}g\big\vert_{\mathcal{V}_{\partial}^{\text{in}}}^2.
\end{equation*}
Finally, let $(u,\hat{u})$ be a sufficiently regular solution of \eqref{eq:sys1}--\eqref{eq:sys5} and set $\hat{u}(x_i)=u(x_i)$ at grid points in the interior of the edges. Then 
\begin{equation*}
(a\partial_t u(t),w_h)_{\mathcal{T}_h}+b_h(u(t),\hat{u}(t);w_h,\hat{w}_h)=0 
\end{equation*}	 
for all $w_h \in W_h$, $\hat w_h \in \hat{W}^0_h$ and all $0 \le t \le T$,
i.e., the method is consistent. 
\end{lemma}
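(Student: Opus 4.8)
The plan is to establish the four assertions in order, deriving solvability and both balance laws from the semi-ellipticity identity, which is the algebraic core of the lemma.

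First I would prove the semi-ellipticity identity by a face-by-face computation. Since $b^e$ is constant on each element, integration by parts turns the volume term into $-(bw_h,\partial_x w_h)_{\mathcal T_h}=-\tfrac12\sum_{T}\sum_{x\in\partial T} bn\, w_h(x)^2$, a sum of signed boundary contributions with $n$ the element-outward normal. Adding the upwind term and splitting the definition $bn\,w_h^\ast=\max(bn,0)w_h+\min(bn,0)\hat w_h$ into the cases $bn>0$ and $bn<0$, each face contributes $\tfrac12|bn|(w_h-\hat w_h)^2-\tfrac12 bn\,\hat w_h^2$ after completing the square. I then sum the leftover $-\tfrac12 bn\,\hat w_h^2$ over the grid points: at interior edge nodes the two adjacent faces carry opposite normals and cancel; at a junction $v\in\mathcal V_0$ the single-valuedness of $\hat w_h$ together with the flow conservation \eqref{eq:sumbn}, i.e. $\sum_{e\in\mathcal E(v)}b^e n^e(v)=0$, makes the contribution vanish; at inflow nodes it vanishes because $\hat w_h^v=0$ for $\hat w_h\in\hat W_h^0$; only outflow nodes survive, and combining with the last term of $b_h$ yields the claimed identity $b_h(w_h,\hat w_h;w_h,\hat w_h)=\tfrac12|\,|b|^{1/2}(w_h-\hat w_h)|_{\partial\mathcal T_h}^2+\tfrac12|\,|b|^{1/2}\hat w_h|^2_{\mathcal V_{\partial}^{\text{out}}}$.

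For unique solvability I would observe that testing with $w_h=0$ and $\hat w_h\in\hat W^0_h$ gives purely algebraic equations that determine $\hat u_h$ explicitly from $u_h$: the upwind interior trace at interior edge and outflow boundary nodes, and at junctions the convex combination fixed by the discrete analogue of \eqref{eq:sys5}, which is solvable since $\sum_{e\in\mathcal E^{\text{in}}(v)}b^e n^e(v)>0$. Eliminating $\hat u_h=Lu_h+\ell(g)$ reduces \eqref{varform-HDG} to a linear ODE system $M\dot c=Ac+f(t)$ with symmetric positive definite mass matrix $M$ from $(a\,\cdot,\cdot)_{\mathcal T_h}$ (using $a\ge a_0>0$), so the initial value problem with $u_h(0)$ the $L^2$-projection of $u_0$ has a unique solution; uniqueness alternatively follows from the semi-ellipticity identity applied to the homogeneous problem. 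The conservation law is then obtained by testing with $w_h\equiv1$ and a suitable constant $\hat w_h$: the volume term drops and the fluxes collapse via the discrete coupling relations (interior faces cancel, junction fluxes vanish by \eqref{eq:sys6}), leaving only the boundary vertices on the right-hand side.

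For the energy identity I would test with $w_h=u_h$ and $\hat w_h=\hat u_h-\hat g\in\hat W^0_h$, where $\hat g$ extends the inflow data $g$ by zero; the choice $\hat w_h=\hat u_h$ is \emph{not} admissible since $\hat u_h=g\neq0$ at inflow, which is precisely why the correction $\hat g$ is needed. Splitting by linearity, $b_h(u_h,\hat u_h;u_h,\hat u_h)$ is evaluated by the same face-by-face algebra as in the first step, except that the inflow nodes now survive and produce $+\tfrac12|\,|b|^{1/2}g|^2_{\mathcal V_\partial^{\text{in}}}$, while the correction $b_h(u_h,\hat u_h;0,\hat g)=|\,|b|^{1/2}g|^2_{\mathcal V_\partial^{\text{in}}}$ is subtracted; together with $(a\partial_t u_h,u_h)_{\mathcal T_h}=\tfrac12\frac{d}{dt}\|a^{1/2}u_h\|^2_{\mathcal T_h}$ this gives the stated identity after multiplication by two. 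Finally, for consistency I would multiply \eqref{eq:sys1} by $w_h$ and integrate by parts element-wise to get $(a\partial_t u,w_h)_{\mathcal T_h}-(bu,\partial_x w_h)_{\mathcal T_h}+\langle bn\,u,w_h\rangle_{\partial\mathcal T_h}=0$, and then check that the extra terms of $b_h(u,\hat u;w_h,\hat w_h)$ vanish for the exact solution. The key point is that at every face where the upwind flux uses the hybrid value ($bn<0$) one has $\hat u=u$ — at interior edge nodes by the definition $\hat u(x_i)=u(x_i)$ and continuity, at outflow edges of a vertex by \eqref{eq:sys3}, and at inflow boundary nodes by \eqref{eq:sys3}--\eqref{eq:sys4} — so that $\langle bn\,u^\ast,w_h\rangle_{\partial\mathcal T_h}=\langle bn\,u,w_h\rangle_{\partial\mathcal T_h}$, and the residual pairing against $\hat w_h$ cancels node-by-node using \eqref{eq:sys6} at junctions and the definition \eqref{eq:sys5} of $\hat u^v$ at outflow vertices. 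I expect the \emph{main obstacle} to be bookkeeping rather than conceptual: keeping the signs of $bn$, the element-outward normals, and the single-valued hybrid values consistent across the three node types, and making precise the repeated use of \eqref{eq:sumbn}, which is what forces the junction contributions to cancel in each of the four parts.
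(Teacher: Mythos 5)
Your proposal is correct and follows essentially the same route as the paper, which only sketches the argument (semi-ellipticity by the standard element-wise completing-the-square computation, algebraic elimination of $\hat u_h$ plus Picard--Lindel\"of for solvability, and "appropriate testing" for conservation, energy, and consistency); your write-up simply carries out these steps in detail, and the details check out, including the correction by the lifted boundary datum $\hat g$ needed to make the test function admissible in $\hat W_h^0$.
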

\begin{proof}
The semi-ellipticity of $b_h$ follows by standard arguments; see e.g. \cite{Ern,EggerKugler}. As a consequence of this identity and Assumption~\ref{ass:1}, $\hat{u}_h$ can be eliminated algebraically and the discrete problem can be turned into an linear ordinary differential equation. Existence of a unique solution then follows by the Picard-Lindel\"of theorem. The conservation property and the energy identity follow by appropriate testing. 
\end{proof}

\begin{remark}
The discretization inherits most of the properties from the continuous problem. The dissipation terms in the energy estimate are partly due to possible jumps across network junctions, which are present also on the continuous level, and partly due to jumps at interior vertices, which are caused by numerical dissipation due to the upwind mechanism in the discontinuous Galerkin method. 
\end{remark}
We are now in the position to establish order optimal a-priori error estimates.
\begin{theorem}\label{thm-conv-HDG}
Let $(u,\hat{u})$ denote a sufficiently regular solution of the system \eqref{eq:sys1}--\eqref{eq:sys5} and let $(u_h,\hat{u}_h)$ be the semi-discrete solution defined by Problem~\ref{prob:hdg}. Then
\begin{align*}
\Vert u-u_h\Vert_{L^\infty([0,T];L^2(\mathcal{E}))}\leq C_T h^{k+1}\vert u\vert_{W^{1,\infty}([0,T];H^{k+1}_{pw}(\mathcal{T}_h))},
\end{align*}
with constant $C_T$ only depending on the bounds for the coefficient $a$ and $T$.
\end{theorem}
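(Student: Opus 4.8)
The plan is to use the standard energy technique for Galerkin discretizations of evolution problems, combining the consistency and the semi-ellipticity already supplied by Lemma~\ref{lemma-bh-stab} with approximation estimates for a suitably chosen projection, and to close the argument by Gronwall's inequality. The only genuinely delicate point is the choice of projection: the naive $L^2$-projection onto $W_h$ would only yield the suboptimal rate $h^{k+1/2}$, because after testing one is forced to absorb the interface contribution $\langle\max(bn,0)\,\eta,e_h-\hat e_h\rangle_{\partial\!\mathcal{T}_h}$ of the consistency error into the dissipation via Young's inequality, leaving a facet term $|\eta|^2_{\partial\!\mathcal{T}_h}=O(h^{2k+1})$. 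To recover the optimal order $h^{k+1}$ I would instead employ the Gauss--Radau type projection adapted to the edgewise constant flow direction.

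Concretely, since $b^e$ has a fixed sign on each edge, every element $T=[x_{i-1},x_i]\in\mathcal{T}_h$ has a well-defined outflow node $x_T^{\mathrm{out}}$. I define $\Pi_h u\in W_h$ elementwise as the unique polynomial in $P_k(T)$ with $(\Pi_h u-u,v)_{L^2(T)}=0$ for all $v\in P_{k-1}(T)$ and the nodal condition $(\Pi_h u)(x_T^{\mathrm{out}})=u(x_T^{\mathrm{out}})$; these are $k+1$ conditions fixing the $k+1$ degrees of freedom, and a standard scaling argument gives $\|u-\Pi_h u\|_{\mathcal{T}_h}\le C h^{k+1}|u|_{H^{k+1}_{pw}(\mathcal{T}_h)}$ together with the same bound for $\partial_t(u-\Pi_h u)=u_t-\Pi_h u_t$, since the spatial projection commutes with $\partial_t$. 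For the hybrid component no projection is needed: the exact trace $\hat u$ from Lemma~\ref{lemma-bh-stab} (with $\hat u(x_i)=u(x_i)$ at interior nodes and the mixing values at junctions) already lies in $\hat W_h=\mathbb{R}^{\hat M}$.

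Writing $\eta=u-\Pi_h u$ and $e_h=\Pi_h u-u_h$, the consistency identity of Lemma~\ref{lemma-bh-stab} minus the scheme \eqref{varform-HDG}, together with the bilinearity of $b_h$, gives the error equation $(a\partial_t(\eta+e_h),w_h)_{\mathcal{T}_h}+b_h(\eta,0;w_h,\hat w_h)+b_h(e_h,\hat e_h;w_h,\hat w_h)=0$, where $\hat e_h=\hat u-\hat u_h\in\hat W_h^0$ because the inflow values of $\hat u$ and $\hat u_h$ both equal $g$. The crucial observation is that the consistency error $b_h(\eta,0;e_h,\hat e_h)$ now vanishes identically: its volume part $-(b\eta,\partial_x e_h)_{\mathcal{T}_h}$ is zero since $b$ is edgewise constant and $\partial_x e_h\in P_{k-1}$ is annihilated by $\eta$ through the $L^2$-orthogonality; the facet part reduces to $\langle\max(bn,0)\,\eta,e_h-\hat e_h\rangle_{\partial\!\mathcal{T}_h}$, and $\max(bn,0)$ is supported precisely on the outflow nodes where $\eta$ vanishes by construction; and the $\mathcal{V}_{\partial}^{\text{out}}$-term carries the factor $\hat\eta=0$. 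Testing with $(w_h,\hat w_h)=(e_h,\hat e_h)$ and invoking the semi-ellipticity then leaves the clean inequality $\tfrac12\frac{d}{dt}\|a^{1/2}e_h\|_{\mathcal{T}_h}^2\le -(a\partial_t\eta,e_h)_{\mathcal{T}_h}\le \tfrac12\|a^{1/2}\partial_t\eta\|_{\mathcal{T}_h}^2+\tfrac12\|a^{1/2}e_h\|_{\mathcal{T}_h}^2$.

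From here the argument is routine: Gronwall's lemma bounds $\|a^{1/2}e_h\|^2_{L^\infty([0,T];L^2(\mathcal{E}))}$ by $e^{CT}$ times $\|a^{1/2}e_h(0)\|_{\mathcal{T}_h}^2+\int_0^T\|a^{1/2}\partial_t\eta\|_{\mathcal{T}_h}^2\,dt$; the initial error $e_h(0)=\Pi_h u_0-u_h(0)$ is $O(h^{k+1})$ since $u_h(0)$ is the $L^2$-projection of $u_0$ and both projections approximate $u_0$ to that order, and the time-integral term is $O(h^{2(k+1)})$ by the projection estimate for $\partial_t\eta$. Using $a\ge a_0>0$ to pass from the weighted to the plain $L^2(\mathcal{E})$-norm and the triangle inequality $\|u-u_h\|\le\|\eta\|+\|e_h\|$ with the approximation bound for $\eta$ then yields the claim, with $C_T$ depending only on $a_0$, $\|a\|_{L^\infty}$ and $T$ through the Gronwall constant. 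The main obstacle, deserving the most care, is exactly the vanishing of the consistency error: it is what upgrades the generic $h^{k+1/2}$ estimate to the optimal $h^{k+1}$ and is the sole reason the Gauss--Radau rather than the $L^2$ projection must be used, so the real work lies in verifying the two defining projection properties and the resulting edge-by-edge cancellation while carefully tracking the flow orientation.
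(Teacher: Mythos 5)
Your proposal is correct and follows essentially the same route as the paper: the same error splitting, the same Gauss--Radau-type projection matching the outflow node and $L^2$-orthogonal to $P_{k-1}$ (so that both the volume term and the upwind facet term of the consistency error vanish), the choice $\hat\pi_h u=\hat u$ so that $\hat\eta_h=0$, testing with the discrete error, semi-ellipticity, and Gronwall. Your treatment is, if anything, slightly more explicit than the paper's on why the facet term vanishes and on the initial error $\epsilon_h(0)$.
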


\begin{proof}
As usual, the proof is based on an error splitting 
\begin{align*}
\Vert u-u_h\Vert_{L^\infty([0,T];L^2(\mathcal{E}))}\leq \Vert \eta_h \Vert_{L^\infty([0,T];L^2(\mathcal{E}))}+\Vert \epsilon_h \Vert_{L^\infty([0,T];L^2(\mathcal{E}))}
\end{align*}
into projection error $\eta_h = u - \pi_h u$ and discrete error $\epsilon_h = \pi_h u - u_h$. 
Similar to \cite{Thomee}, we use a particular projection $\pi_h:H^1_{pw}(\mathcal{E})\rightarrow W_h$ defined for element $T_i^e \in \mathcal{T}_h$ by 
\begin{equation*}
\pi_hw(x_{\text{i,out}}^{e})=w(x_\text{i,out}^{e}) 
\quad \text{and} \quad 
\int_{T_i^{e}}(w-\pi_hw)p\ dx=0 \quad \forall p\in\mathcal{P}_{k-1}(T_i^e).
\end{equation*}
Here $x_{i,out}^e$ is the outflow point of the element $T_i^e=[x_{i-1},x_i]$, i.e., $x_{i,out}^e = x_{i}^e$ if $b^e > 0$ and $x_{i,out}^e = x_{i-1}^e$ otherwise.
By standard estimates for this projection, we obtain 
\begin{align*}
\Vert \eta_h \Vert_{L^\infty([0,T];L^2(\mathcal{E}))}\leq Ch^{k+1}\Vert u\Vert_{L^\infty([0,T];H^{k+1}_{pw}(\mathcal{T}_h))}.
\end{align*}
Further define $\hat \pi_h u^v = \hat u^v$ for vertices $v \in \mathcal{V}$ of the network and $\hat \pi_h u(x_i^e) = u(x_i^e)$ for interior grid points $x_i^e$ on edge $e$. We abbreviate $\hat \epsilon_h = \hat \pi_h u - \hat u_h$, $\hat \eta_h = \hat u - \hat \pi_h u$, and denote by $\eta_h^\ast$ the upwind value as in the definition of the method. Note that $\hat \eta_h = 0$ and $\eta_h^\ast=0$ by construction. 
Using consistency of the discrete problem, we get
\begin{eqnarray*}
(a \partial_t \epsilon_h(t),w_h)_{\mathcal{T}_h} &+& b_h(\epsilon_h(t),\hat{\epsilon}_h(t);w_h,\hat{w}_h) \\
&=& (a \partial_t \eta_h(t),w_h)_{\mathcal{V}_h} + b_h(\eta_h(t),\hat{\eta}_h(t);w_h,\hat{w}_h)
\end{eqnarray*}
for all $w_h \in W_h$, $\hat w_h \in \hat W_h^0$, and $0 \le t \le T$.
Testing with $w_h = \epsilon_h$ and $\hat w_h = \hat \epsilon_h$ yields
\begin{align*}
\frac{1}{2}\frac{d}{dt}\Vert &a^{1/2} \epsilon_h\Vert_{\mathcal{T}_h}^2
=\underbrace{-b_h(\epsilon_h,\hat{\epsilon}_h;\epsilon_h,\hat{\epsilon}_h)}_{\leq 0}
  +(a\partial_t\eta_h,\epsilon_h)_{\mathcal{T}_h}
  +b_h(\eta_h,\hat{\eta}_h;\epsilon_h,\hat{\epsilon}_h)\\
&\leq
(a\partial_t\eta_h,\epsilon_h)_{\mathcal{T}_h}-\underbrace{(b\eta_h,\partial_x\epsilon_h)_{\mathcal{T}_h}}_{=0,\ \text{(proj.)}}
+\langle bn \, \underbrace{\eta_h^\ast}_{=0},\epsilon_h-\hat{\epsilon}_h\rangle_{\partial\!\mathcal{T}_h}+\langle bn\underbrace{\hat{\eta}_h}_{=0},\hat{\epsilon}_h\rangle_{\mathcal{V}_{\partial}^{\text{out}}}\\
&\leq \frac{c}{2} \Vert  \partial_t\eta_h\Vert_{\mathcal{T}_h}^2+\frac{1}{2}\Vert a^{1/2} \epsilon_h\Vert_{\mathcal{T}_h}^2.
\end{align*}
Let us note that the constant $c$ only depends on the bound for $a$ and the polynomial degree $k$. 
Integrating the remaining terms in time and applying Gronwall's lemma then allows to bound the discrete error by the projection error. 
\end{proof}

\begin{remark}
Using the semi-ellipticity of the discrete bilinear form, it is possible to obtain similar bounds also for the error $\hat \epsilon_h = \hat \pi_h u - \hat u_h = \hat u - \hat u_h$ at the grid points. 
A sub-sequent time discretization, e.g., by implicit Runge-Kutta methods, can also be analyzed with standard arguments; see \cite{Ern,Thomee}. Since the problem is one-dimensional, 
the computational overhead of an implicit time integration scheme is negligible.
\end{remark}

\vspace{-1em}

\section{Numerical tests}
\label{sec:num}

For our numerical tests, we consider the following network topology.
\begin{center}
\begin{tikzpicture}[thick,scale=1.5, every node/.style={scale=0.8}]
\node (A) at (0,0) [circle,draw,thick] {$v_1$};
\node (B) at (1,0) [circle,draw,thick] {$v_2$};
\node (C) at (1.75,0.5) [circle,draw,thick] {$v_3$};
\node (D) at (1.75,-0.5) [circle,draw,thick] {$v_4$};
\node (E) at (2.5,0) [circle,draw,thick] {$v_5$};
\node (F) at (3.5,0) [circle,draw,thick] {$v_6$};

\draw[->, thick] (A) to node[above] {$e_1$} (B);
\draw[->, thick] (B) to node[above] {$e_2$} (C);
\draw[->, thick] (B) to node[above] {$e_3$} (D);
\draw[->, thick] (C) to node[right] {$e_4$} (D);
\draw[->, thick] (C) to node[above] {$e_5$} (E);
\draw[->, thick] (D) to node[above] {$e_6$} (E);
\draw[->, thick] (E) to node[above] {$e_7$} (F);
\end{tikzpicture}
\end{center}
We set $\ell^{e}=1$ and $a^{e}=1$ for all edges, and choose $b^{e_1}=2,\ b^{e_2}=b^{e_3}=1$, 
$b^{e_4}=b^{e_5}=0.5$, $b^{e_6}=1.5$, and $b^{e_7}=2$ which satisfies condition \eqref{eq:sumbn}. 
We further choose $u_0^{e}= 0$ as initial conditions and $g^{v_1}(t)=t^2/25$ as inflow boundary conditions, which fulfill the compatibility condition $u_0^{e_1}(0)=g^{v_1}(0)$. 
The solution for this problem can be computed analytically and one can verify that $u\in W^{1,\infty}(0,T;H_{pw}^2(\mathcal{T}_h))$. From the estimates of Theorem~\ref{thm-conv-HDG}, we therefore expect second order convergence when discretizing with piecewise polynomials of order $k=1$. 
For time integration, we utilize an implicit Euler method with sufficiently small step size $\tau \le h^2$, and we use
\begin{equation*}
err = \max_{0\leq t^n\leq T}\Vert I_h u(t_n)-u_h^n\Vert_{L^2(\mathcal{E})}
\end{equation*}
as measure for the error, where $I_h u$ denotes the element-wise linear interpolation.
\begin{figure}[ht]
  \begin{center}
    \begin{minipage}[b]{0.6\textwidth}
      \includegraphics[scale=0.4]{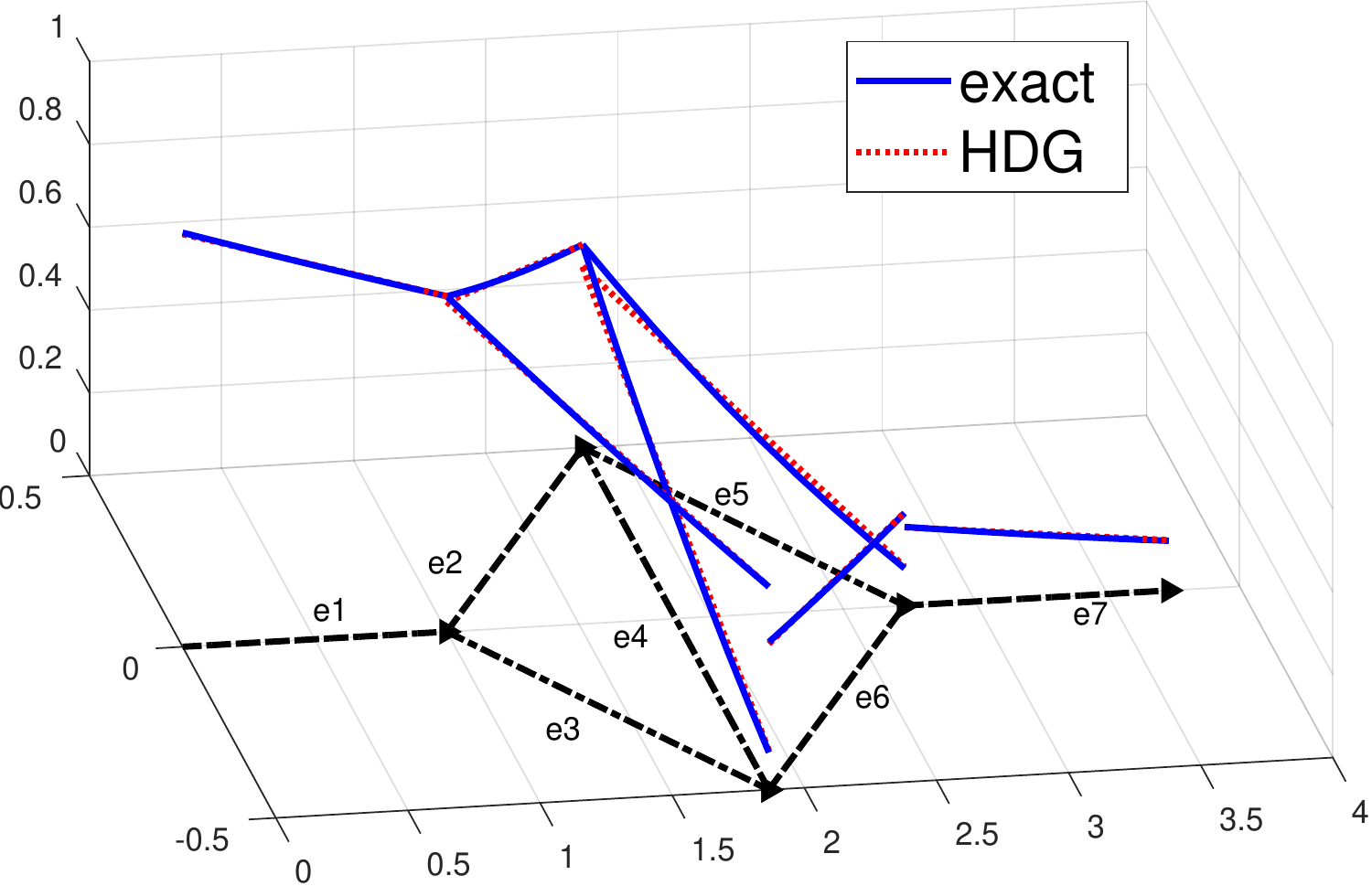}
    \end{minipage}
    \begin{minipage}[b]{0.35\textwidth}
      \centering
      \bgroup
      \setlength\tabcolsep{1em}
      \begin{tabular}{l||c|c}
        \rule{0mm}{2.3ex} $h$ & err & rate\\
        \hline
        \rule{0mm}{2.3ex} $2^0$    &  0.0303 &  --- \\
        \rule{0mm}{2.3ex} $2^{-1}$ &  0.0076 &  1.9948\\
        \rule{0mm}{2.3ex} $2^{-2}$ &  0.0019 &  2.0010\\
        \rule{0mm}{2.3ex} $2^{-3}$ &  0.0005 &  1.9774\\
        \rule{0mm}{2.3ex} $2^{-4}$ &  0.0001 &  1.9978\\
        \rule{0mm}{2.3ex} $2^{-5}$ &  0.0000 &  1.9725\\
      \end{tabular}
      \egroup
      \vspace{5ex}
    \end{minipage}
  \end{center}
  \caption{Left: Snapshot of the exact solution $u$ (blue) and the hybrid dG solution $u_h$ for mesh size $h=1$ (red, dashed). The discontinuity at network junctions is clearly visible.  Right: Error and convergence rates for time horizon $T=5$. As expected we observe second order convergence.}
  \label{fig}
\end{figure} 
As expected, the convergence rates observed in our numerical tests coincide with predictions from Theorem~\ref{thm-conv-HDG}. The solution plot in Figure~\ref{fig} clearly illustrates the discontinuity of the analytical solution at network junctions.

\begin{acknowledgement}
This work was supported by the German Research Foundation (DFG) via grants TRR~154 C4 and the ``Center for Computational Engineering'' and at TU Darmstadt. 
\end{acknowledgement}


\end{document}